\newtheorem{theorem}{Theorem}[section]
\newtheorem{lemma}[theorem]{Lemma}
\theoremstyle{definition}
\newtheorem{prop}[theorem]{Proposition}
\theoremstyle{remark}
\newcommand{\vip}{\vskip.2cm}
\newcommand{\R}{\mathbb{R}}
\newcommand{\E}{\mathbb{E}}
\newcommand{\p}{{\mathbb{P}}}
\newcommand{\indiq}{{\bf 1}}
\newcommand{\clim}{\lim_{T\to\infty}}
\newcommand{\cint}{\int_0^\infty}
\begin{document}
\begin{CJK}{UTF8}{gbsn}

\title[Limit theorem for unstable Hawkes processes]{SCALING LIMITS FOR SUPERCRITICAL NEARLY UNSTABLE  HAWKES PROCESSES}

\author[C. Liu]{Chenguang Liu}
\author[L. Xu]{Liping Xu}
\author[A.Zhang]{An Zhang}

\address{Delft Institute of Applied Mathematics, EEMCS, TU Delft, 2628 Delft, The Netherlands}
\email{C.Liu-13@tudelft.nl}
\address{School of mathematical sciences, Beihang University, PR China}
\email{xuliping.p6@gmail.com}
\address{School of mathematical sciences, Beihang University, PR China}
\email{anzhang@buaa.edu.cn}

\subjclass[2010]{ 60G55, 60F05.}

\keywords{Hawkes processes; Limit theorems;  Point processes.}

\begin{abstract} 
In this paper, we investigate the asymptotic behavior of nearly unstable Hawkes processes whose regression kernel has $L^1$ norm strictly greater than one and close to one as time goes to infinity.  We find that, the scaling size determines the scaling behavior of  the processes like in \cite{MR3313750}. Specifically, after suitable rescaling, the limit of the sequence of Hawkes processes is deterministic. And also with another appropriate rescaling, the  sequence converges in law to an integrated Cox–Ingersoll–Ross like process. This theoretical result may apply to model the recent COVID-19 in epidemiology and in social network.

\end{abstract}

\maketitle

\section{Introduction and main results}
\subsection{Introduction}
Hawkes process $(Z_t)_{t\ge0}$ is a random point process   that describes
temporal stochastic self-exciting phenomena which is first introduced by Hawkes \cite{MR278410} in 1971.  The evolution of the process is influenced by the timing of the past events. It is determined by its intensity process $(\lambda_t)_{t\ge0}$, which is of the form
\[\lambda_t=\mu+\int_0^t\varphi(t-s)dZ_s.\]
where $\mu>0$ is interpreted as  the ``background rate'' or ``exogenous rate" of the process,  and $\varphi$ is the regression kernel,  which is the intensity function of a nonhomogeneous Poisson process, and quantifies the influence of past  events of the process on the arrival of future events.  More introduction related to Hawkes processes,  see \cite{MR1950431,MR2371524}. 
\vip
Hawkes processes not only is  an  interesting subject in mathematics but also  have been widely applied to various fields to model both natural and social phenomena, like originally modeling earthquakes and their aftershocks in seismology \cite{MR278410,MR378093}，brain activity in neuroscience \cite{MR3322314}, risk estimation,  transaction times and midquote changes and so on in mathematical finance \cite{MR3219705,MR3480107,MR3750729} and social networks interactions \cite{SIR2018,Social}, etc. There are substantial numbers of works on Hawkes processes and its applications in various disciplines, here we give a  nonexhaustive reference   and  we refer the reader to the bibliography  and references therein for more details.  
\vip
From a mathematical  point of view, it's very nature  to investigate  the large time behavior of Hawkes process, especially the Law of Large Numbers and the Central Limit Theorem, which have been established for different cases of  Hawkes processes, like linear, nonlinear, subcritical,  critical and supercritical, under the fixed rate assumption $\cint\varphi dt<\infty$, see \cite{MR3102513,MR3395721,MR4418230,MR3054533,MR3187500}. Recently,  the limit theorem for the multivariate marked Hawkes processes was established,  see \cite{MR4757489}. Meanwhile,  the functional central limit theorems for subcritical and critical Hawkes processes and the convergence for the heavy-tailed Hawkes processes were investigated as well in \cite{Xu2024} and \cite{Horst2023}, respectively. In particular, for the linear regime, because of its tractability especially the immigration-birth representation, the limit behaviors have been well understood and widely applied in practice. We  also restrict our consideration to the linear case.  And the rate $\cint\varphi dt<1$ refers to the subcritical case,  $\cint\varphi dt=1$ refers to the critical  case and otherwise the supercritical case.  In finance,  this fixed rate is the so-called {\it branching  ratio}, which is interpreted as the average proportion of endogenously triggered events. However, in practice, this ratio can't be always fixed, it may vary from time to time caused by many real factors. This is consistent with the statistical estimation results, which seems to be shown very often, only nearly unstable Hawkes processes are able to fit the data properly, see the introduction of \cite{MR3313750}.  Regarding to the nearly unstable, first introduced by  Jaisson and Rosenbaum in \cite{MR3313750}, means that the $L^1$ norm of their kernel is close to one. In \cite{MR3313750},  Jaisson and Rosenbaum assume a time depending kernel  with its $L^1$ norm less than $1$ and close to $1$ as time goes to infinity, which is a  near instability condition, i.e.  
\[\cint \phi^Tdt=a_T<1, \  \hbox{and} \lim_{T\to\infty}a_T=1,\]
where $(a_T )_{T\ge0}$ is a sequence of positive numbers such that for all $T$, $a_T<1$. Together with the assumption $\cint \phi^Ttdt=m<\infty$, they then show that, these nearly unstable Hawkes process,  under  suitable  scaling, converges  deterministically   if $T(1-a_T)\to+\infty$  as $T\to\infty$, that is, (Law of Large Numbers)
\[\sup_{t\in[0,1]}\frac{1-a_T}{T}|Z_{Tt}^T-\E[Z^T_{Tt}]|\to 0   \ \hbox{in probability},\]
and tends to an integrated Cox-Ingersoll-Ross process if $T(1-a_T)\to\lambda>0$  as $T\to\infty$, that is, (Central Limit Theorem)
\[\Big( \frac{1-a_T}{T}Z^T_{Tt} \Big)_{t\in[0,1]}\to\Big( \int_0^t X_s ds \Big)_{t\in[0,1]} \  \hbox{in law} \]
for the Skorohod topology, where $(X_t)_{t\in[0,1]}$ is a  Cox-Ingersoll-Ross process, which is  originally introduced  in \cite{MR785475} and classically used to model stochastic volatilities in finance. 

\vip

\subsection{Motivation and comparison.}

As we noticed, the great bulk of literature related to the
Hawkes process and its applications to geophysics and finance focuses
on the subcritical regime, whereas the supercritical regime is little studied but is  more germane to epidemics, like modeling the spread of COVID-19 in \cite{MR4276450}, and to the social network \cite{Social, SIR2018}. 

\vip

In this paper, we consider the supercritical nearly unstable Hawkes processes, i.e. the kernels of the Hawkes processes depending on $T$, with $L^1$ norm strictly larger than one and close to one  as $T\to\infty$. We prove that under the assumption that the norm of kernel close to one at a  suitable speed (see the assumption $H(\lambda)$),  the limit behavior of our sequence of Hawkes processes, with a suitable rescaling, is an integrated diffusion process, like a CIR process, which is a strong solution of a Brownian driven SDE. To do so, we adopt a different approach from that in \cite{MR3313750}, where in order to get the limiting behavior of our sequence of Hawkes processes, Jaisson and Rosenbaum studied the limiting behavior of the intensity of the processes. Roughly speaking,  for $t\in[0,1]$, they rewrote the intensity $\lambda^T_{Tt}$ to $C_t^T=\lambda^T_{Tt}(1-a_T)$, which can be written as a stochastic integral equation, then prove the tightness of $C_t^T$ and pass the coefficients of the equation to the limit to obtain the limiting law. However, in our context, instead of considering the intensity, we directly deal with the Hawkes processes, following the spirit of  \cite[Chapter 5.3]{MR3187500}. Finally, Let's also mention the mean field limit of Hawkes processes studied in \cite{MR3499526,MR4127342}.  

\subsection{Setting}
We consider a  measurable  function $\varphi:[0,\infty)\to [0,\infty)$  and  a  Poisson measure $(\Pi(dt,dz))$,  on $[0,\infty)\times [0,\infty)$ with intensity $dtdz$. The sequence $\{a_T\}_{T\ge 0}$ indexed by $T$ is positive and $\lim_{T\to \infty} a_T=1.$ We put $\varphi^T=a_T\varphi.$
We consider the following system indexed by $T$:  all $t\geq 0,$
\begin{align}\label{sssy}
Z_{t}^{T}:=\int^{t}_{0}\int^{\infty}_{0}\boldsymbol{1}_{\{z\le\lambda_{s}^{T}\}}\Pi(ds,dz), \hbox{ where }
\lambda_{t}^{T}:=\mu+\int_{0}^{t-}\varphi^T(t-s)dZ_{s}^{T}.
\end{align}
In this paper, $\int_{0}^{t}$ means $\int_{[0,t]}$, and $\int_{0}^{t-}$ means $\int_{[0,t)}$. The solution $(Z_{t}^T)_{t\ge 0}$ is a  counting processes. 
By \cite[Proposition 1]{MR3499526}, see also \cite{MR1411506,MR3449317}, the system $(1)$ has a unique $(\mathcal{F}_{t})_{t\ge 0}$-measurable c\`adl\`ag 
solution, where   
$$\mathcal{F}^T_{t}=\sigma(\Pi^{i}(A):A\in\mathcal{B}([0,t]\times [0,\infty)),$$ as soon as $\varphi^T$ is locally integrable.

\subsection{An illustrating example in epidemiology}
Let's first recall the noticeable branching representation of Hawkes processes  in terms of
Galton-Watson trees given by Oakes-Hawkes \cite{MR378093}. That is, in a population process , the migrants arrive according to a Poisson process with  ``exogenous rate"  $\mu$. Then each migrant gives birth to children according to ``endogenous rate" given by $\varphi$, and each children gives birth to grandchildren according to the same ``endogenous rate". 
\vip
Now, let us consider the epidemical example. Let process $(Z_{t}^{T})_{t\ge 0}$ represent the number of infected individuals over the observation period $[0, T]$. This process accounts for two main mechanisms of infection: $\textbf{immigration}$ and $\textbf{transmission}$ from already infected individuals.
\vip
1. $\textbf{Immigration of Infected Individuals}:$ New infections enter the population independently of current local transmissions, modeled by a homogeneous Poisson process with rate $\mu$. This represents external sources of infection, such as travelers or new arrivals carrying the disease. 
\vip

2.  $\textbf{Local Transmission Dynamics}:$ Once an individual is infected, they can further spread the infection to others. This local transmission is captured by an inhomogeneous Poisson process, where the intensity is given by the kernel function $a_T\varphi(t-s).$ Here:

$\bullet$ $t$ denotes the current time,

$\bullet$  $s$ is the time at which an individual was infected,

$\bullet$ $\varphi$ is a kernel function, typically assumed to be decreasing. This assumption reflects the reality that the infectivity of an individual often decreases over time due to factors such as recovery or isolation.

\vip

The parameter $a_T$ represents the level of social interference, which encompasses various control measures such as medical interventions, government policies, and vaccination efforts. Notably, $a_T$ is a function of time $T$, and it typically decreases as $T$ increases, reflecting the effectiveness of these interventions in reducing the spread of infection over time. So, from this point of view, $a_T>1$ is quite consistent with the reality.

\subsection{Assumptions}
We always assume  
\renewcommand\theequation{{$\Lambda$}}
\begin{equation}
  \|\varphi\|_1= \int_0^\infty \varphi(s)ds=1\  \hbox{and}\ \int_0^\infty s\varphi(s)ds=m>0.
\end{equation}
And  for $\lambda> 0$,  we put the assumption:
\renewcommand\theequation{{$H(\lambda)$}}
\begin{equation}
    a_T> 1\  \hbox{and}\ \lim_{T\to\infty}T(a_T-1)=\lambda,
\end{equation}
or 
\renewcommand\theequation{{$H(\infty)$}}
\begin{equation}
    a_T> 1\  \hbox{and}\ \lim_{T\to\infty}T(a_T-1)=\infty.
\end{equation}
\renewcommand\theequation{\arabic{equation}}\addtocounter{equation}{-2}
The assumption $\cint s\varphi(s)ds=m>0$ is crucial in our context which enables us to approximate $\Psi^T$ by an exponential function in Lemma \ref{phito}.  
\subsection{Notation}
Let's first recall the convolution of two functions $f,g: [0,\infty)\mapsto \R$, which is defined by (If it exists) $(f\star g)(t)=\int_0^tf(t-s)g(s)ds$. $(\varphi)^{\star n}$ represents the $n$-th convolution product of function $\varphi$ and $(\varphi)^{*1}=\varphi$.
Since $\int_0^\infty \varphi(s)ds=1$, it's not hard to observe that  
$\cint (\varphi^T)^{\star n}(s)ds=(a_T)^n$. We adopt the conventions $\varphi^{\star0}(s)ds=\delta_0(ds)$ and  $\varphi^{\star0}(t-s)ds=\delta_t(ds)$, and  the convention $\varphi^{\star n}(s)=0$ for $s<0$. We also recall the $L^1$ norm of function $\varphi$ denoted by  $\|\varphi\|_1= \int_0^\infty \varphi(s)ds$.

\vip
Since $ 1<a_T=\int_0^\infty \varphi^T(s)ds<+\infty,$ we can choose a  unique  positive sequence $\{b_T\}_{T\ge 0}$ such that
\begin{align*}
    \int_0^\infty e^{-b_T s}\varphi^T(s)ds=\frac{1}{a_T}.
\end{align*}
$b_T$ is  sometimes referred to  the Malthusian parameter in the literature for fixed $T$, see \cite{MR3187500}.
We define the following functions on $\R^+$,
\begin{align}\label{phipsi}
    \Tilde \varphi^T(t)=e^{-b_T t}\varphi^T(t)\ \hbox{and}\  \Tilde \Psi^T(t)=\sum_{n\ge 1}(\Tilde \varphi^T)^{*n}(t).
\end{align}
Note that $\Tilde \Psi^T(t)$  is well defined on $\R^+$ since $\| \Tilde \varphi^T\|_1=1/a_T<1$.  
A direct computation implies that 
\begin{align}\label{norm}
\|\Tilde \Psi^T\|_1=\frac{\| \Tilde \varphi^T\|_1}{1-\| \Tilde \varphi^T\|_1}=\frac{1}{a_T-1}.
\end{align}
We also define at $t\ge0$,
\begin{equation}\label{PSI}
\Psi^T(t):= e^{b_T t}   \Tilde \Psi^T(t),
\end{equation}
which is well defined as well.
For $n\ge 1$, It's not hard to find that $e^{-b_T t}( \varphi^T)^{*n}(t)=(e^{-b_T t} \varphi^T)^{*n}(t)=(\Tilde \varphi^T)^{*n}(t).$ 
Thus, we have 
\begin{equation}\label{PSI2}
\Psi^T(t)=\sum_{n\ge 1}(\varphi^T)^{*n}(t).
\end{equation}
\vip
Let's recall $(\lambda^T_t)_{t\ge0}$ defined in \eqref{sssy},  and  introduce  the compensated Poisson measure $\Tilde \Pi(ds, dz)=\Pi(ds, dz)-dsdz$ associated to the Poisson measure $\Pi$. For $t\ge0$, set 
\begin{align}\label{mart}
M_t^T:=\int_0^t\cint \indiq_{\{z\le \lambda_s^T\}}\Tilde \Pi(ds, dz),
\end{align}
which is a martingale process related to $Z^T_t$.   We  then readily find that 
\[M_t^T=Z_{t}^{T}-\int_0^t \lambda^T_s ds.
\]
According to \cite[Remark 10]{MR3499526},  refer also to \cite[Chapter 1, Section 4e]{MR1943877}  for definitions and properties of pure
jump martingales and of their quadratic variations, we then have $ \E[M^{T}_s M^{T}_t]= \E[Z^{T}_{s\land t}]$ and  $[M^T,M^T]_t=Z_t^T$. 

\subsection{Main results}
We first give the law of large numbers in the following sense.
\begin{theorem}\label{thm1}
Under the assumption $(\Lambda)$ and $H(\infty)$, the
sequence of Hawkes processes is asymptotically deterministic, in the sense that
the following convergence in $L^2$ holds as $T\to \infty$,
\begin{align*}
    \sup_{x\in[0,1]}\frac{a_T-1}{Te^{b_TTx}}\Big|Z_{Tx}^{T}-\E[Z_{Tx}^{T}]\Big|\to 0.
\end{align*}
\end{theorem}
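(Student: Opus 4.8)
The plan is to avoid the intensity-based route of \cite{MR3313750} and instead follow the spirit of \cite[Chapter 5.3]{MR3187500}, working directly with a martingale representation of the centered process $N_t^T:=Z_t^T-\E[Z_t^T]$. First I would solve the renewal equation for the intensity: writing $dZ_s^T=\lambda_s^T\,ds+dM_s^T$ and subtracting expectations gives $\lambda_t^T-\E[\lambda_t^T]=\int_0^{t-}\varphi^T(t-s)(\lambda_s^T-\E[\lambda_s^T])\,ds+\int_0^{t-}\varphi^T(t-s)\,dM_s^T$, whose resolvent solution, using $\Psi^T=\sum_{n\ge1}(\varphi^T)^{*n}$ and the identity $\varphi^T+\Psi^T\star\varphi^T=\Psi^T$, is $\lambda_t^T-\E[\lambda_t^T]=\int_0^{t-}\Psi^T(t-s)\,dM_s^T$. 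Integrating in time and applying a stochastic Fubini then yields the clean identity $N_t^T=\int_0^t F^T(t-s)\,dM_s^T$ with $F^T(v):=1+\int_0^v\Psi^T(w)\,dw$ (the Fubini step is licit since $\Psi^T$ is locally integrable and $M^T$ is square integrable on $[0,T]$).

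Next, since $s\mapsto F^T(t-s)$ is deterministic hence predictable and $d\langle M^T\rangle_s=\lambda_s^T\,ds$, the isometry gives the \emph{exact} second moment $\E[(N_t^T)^2]=\int_0^t F^T(t-s)^2\,\E[\lambda_s^T]\,ds=\int_0^t F^T(t-s)^2\,m_s^T\,ds$ with $m_s^T=\mu F^T(s)$. Here I would invoke Lemma \ref{phito}: the exponential approximation $\Psi^T(w)\approx\tfrac1m e^{\rho_T w}$ with growth rate $\rho_T\sim(a_T-1)/m$, together with the Malthusian asymptotics $b_T\sim 2(a_T-1)/m$, gives $F^T(v)\le \tfrac{C}{a_T-1}e^{\rho_T v}$ and $m_s^T\le\tfrac{C\mu}{a_T-1}e^{\rho_T s}$, and hence the pointwise bound $\E[(N_{Tx}^T)^2]\le \tfrac{C}{(a_T-1)^4}e^{2\rho_T Tx}$. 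Multiplying by the theorem's weight produces $\tfrac{(a_T-1)^2}{T^2 e^{2b_TTx}}\E[(N_{Tx}^T)^2]\le\tfrac{C}{(T(a_T-1))^2}\,e^{-2\gamma_T Tx}$ with $\gamma_T:=b_T-\rho_T>0$; the key structural point, provable from log-convexity of the Laplace transform of $\varphi$, is that the weight \emph{over-damps} ($b_T>\rho_T$), which is exactly what forces the limit to be $0$.

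The hard part is upgrading this pointwise estimate to the supremal one, because $x\mapsto N_{Tx}^T=\int_0^{Tx}F^T(Tx-s)\,dM_s^T$ is not a martingale in $x$ (the kernel depends on the terminal time $Tx$), so Doob's inequality does not apply directly, and the decreasing weight $e^{-b_TTx}$ rules out any naive maximal bound. I would resolve this by a grid-plus-monotonicity argument. Choosing $0=x_0<\dots<x_n=1$ with mesh of order $1/(b_TT)$, so that $e^{b_TTx}$ varies only by a bounded factor across each cell, and using that both $Z_t^T$ and $\E[Z_t^T]$ are nondecreasing, one gets for $x\in[x_k,x_{k+1}]$ the bound $|N_{Tx}^T|\le\max(|N_{Tx_k}^T|,|N_{Tx_{k+1}}^T|)+\Delta_k$, where $\Delta_k:=\E[Z_{Tx_{k+1}}^T]-\E[Z_{Tx_k}^T]\le m_{Tx_{k+1}}^T\,T(x_{k+1}-x_k)$ is deterministic. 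Estimating $\sup_{x\in[0,1]}$ by $\max_k$ and then $(\max_k)^2$ by $\sum_k$, the random contribution is controlled by the pointwise estimate of the previous step while the deterministic one is controlled by the $\Delta_k$ bound; after weighting, both reduce to the same geometric sum $\sum_k e^{-2\gamma_T Tx_k}$, which with mesh of order $1/(b_TT)$ and $\gamma_T\sim b_T/2$ is $O(1)$. This yields $\E\bigl[\sup_{x\in[0,1]}\tfrac{(a_T-1)^2}{T^2e^{2b_TTx}}(N_{Tx}^T)^2\bigr]\le\tfrac{C}{(T(a_T-1))^2}$.

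Finally, under $H(\infty)$ we have $T(a_T-1)\to\infty$, so the right-hand side tends to $0$, giving the claimed $L^2$ convergence. In summary, the only genuinely delicate ingredients are the stochastic Fubini reduction to the convolution martingale and the uniform-in-$x$ control just described; the remainder is the exact second-moment computation fed by the exponential asymptotics of $\Psi^T$ furnished by Lemma \ref{phito}.
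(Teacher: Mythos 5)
Your route is genuinely different from the paper's, and in one respect more careful. The paper applies Lemma \ref{classic} pathwise with $g=M^T$ to get the Lebesgue-convolution representation $Z^T_{Tx}-\E[Z^T_{Tx}]=M^T_{Tx}+\int_0^{Tx}\Psi^T(Tx-s)M^T_s\,ds$, expands the square using $\E[M^T_sM^T_t]=\E[Z^T_{s\wedge t}]$ together with Proposition \ref{estiZ}-(ii), and ends with the bound $\frac{(a_T-1)^2}{T^2e^{2b_TTx}}\E\big[(Z^T_{Tx}-\E[Z^T_{Tx}])^2\big]\le Ca_T/(T(a_T-1))$, which is a bound for each fixed $x$ (uniform in $x$), i.e.\ it controls $\sup_x\E[\,\cdot\,]$ rather than $\E[\sup_x\,\cdot\,]$. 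Your stochastic-convolution representation $N^T_t=\int_0^tF^T(t-s)\,dM^T_s$ with the It\^o isometry is a correct, equivalent way to reach the same pointwise moment estimate, and your Step 3 (grid plus monotonicity of $Z^T$ and $\E[Z^T]$) supplies exactly the sup-inside-the-expectation control that the statement of Theorem \ref{thm1} asks for and that the paper's own argument does not spell out. That step is sound as described.

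The genuine gap is in how you justify the exponential bounds $F^T(v)\le\frac{C}{a_T-1}e^{\rho_Tv}$ and $\E[\lambda^T_s]\le\frac{C\mu}{a_T-1}e^{\rho_Ts}$. You invoke Lemma \ref{phito}, but (i) that lemma is proved under $H(\lambda)$, whereas Theorem \ref{thm1} is set under $H(\infty)$, and (ii) even under $H(\lambda)$ it gives convergence of $\Psi^T(Tx)$ only in the weak (distributional) sense, which cannot be promoted to pointwise-in-$v$, uniform-in-$T$ upper bounds; ``$\Psi^T(w)\approx\frac1me^{\rho_Tw}$'' is a heuristic, not an estimate. The repair is elementary and is precisely the paper's device: by \eqref{PSI} and \eqref{norm}, $\int_0^v\Psi^T(w)\,dw=\int_0^ve^{b_Tw}\tilde\Psi^T(w)\,dw\le e^{b_Tv}\|\tilde\Psi^T\|_1=\frac{e^{b_Tv}}{a_T-1}$, hence $F^T(v)\le1+\frac{e^{b_Tv}}{a_T-1}$ and $\E[\lambda^T_s]=\mu F^T(s)$. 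With these ($b_T$-exponent) bounds your isometry computation still gives $\E[(N^T_{Tx})^2]\le\frac{C}{(a_T-1)^4}e^{2b_TTx}$ (use $\int_0^{Tx}e^{-b_Ts}ds\le 1/b_T\le Cm/(a_T-1)$), so each weighted grid term is $O\big((T(a_T-1))^{-2}\big)$; the over-damping factor $e^{-2\gamma_TTx_k}$ is lost, but your sum now has only $O(b_TT)=O(T(a_T-1))$ comparable terms (the asymptotic $b_T\sim 2(a_T-1)/m$ follows from $\lim_{T\to\infty}(a_T-\frac1{a_T})/b_T=m$, which is derived inside the proof of Lemma \ref{phito} using only $a_T\to1$, hence is valid under $H(\infty)$), and the total is $O\big(1/(T(a_T-1))\big)\to0$ by $H(\infty)$ --- the same rate as the paper's pointwise bound. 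If you do want strict over-damping, $b_T-\rho_T\ge c(a_T-1)$ can indeed be proved by a Laplace-transform comparison (define $\rho_T$ by $\int_0^\infty e^{-\rho_Ts}\varphi^T(s)\,ds=1-\frac12(a_T-1)$ and use that the derivative of $\theta\mapsto\int_0^\infty e^{-\theta s}\varphi^T(s)\,ds$ is bounded by $a_Tm$ in absolute value, together with $\|e^{-\rho_T\cdot}\varphi^T\|_1<1$ to bound $\int_0^v\Psi^T$), but it cannot be obtained by citing Lemma \ref{phito}.
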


Next, we introduce our second main result related to the  central limit theorem.

\begin{theorem}\label{thin}
Under the assumption $(\Lambda)$ and $H(\lambda)$,
\begin{align*}
   \Big( \frac{Z_t^T}{T^2}\Big)_{t\in  [0,1]}\stackrel{(d)}\longrightarrow \Big(\int^t_0X_sds\Big)_{t\in [0,1]} \quad \hbox{as  $T\to\infty$,}
\end{align*}
where $(X_t)_{t\in[0,1]}$ is  the  unique strong solution to the following SDE
\begin{align*}
     X_t=\frac{1}{m}\int_0^t\Big(\mu+\lambda X_s\Big)ds+\frac{1}{m}\int_0^t \sqrt{X_s}dB_s,
\end{align*}
where $(B_t)_{t\ge0}$ is a Brownian motion.
\end{theorem}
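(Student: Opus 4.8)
The plan is to pass from the point process to the rescaled intensity, exploit the resolvent representation of the intensity, and then run a diffusion approximation whose limit is pinned down by a martingale problem. Write $N^T_t:=Z^T_{Tt}/T^2$ and introduce the rescaled intensity $V^T_t:=\lambda^T_{Tt}/T$. From \eqref{sssy} and \eqref{mart}, $dZ^T_s=\lambda^T_s\,ds+dM^T_s$, so after the time change $s\mapsto Ts$,
\begin{equation*}
N^T_t=\frac1T\int_0^t\lambda^T_{Tu}\,du+\frac{M^T_{Tt}}{T^2}=\int_0^t V^T_u\,du+\frac{M^T_{Tt}}{T^2}.
\end{equation*}
Because $\E[(M^T_{Tt}/T^2)^2]=\E[Z^T_{Tt}]/T^4=\E[N^T_t]/T^2$, Doob's inequality makes the martingale remainder vanish uniformly on $[0,1]$ in $L^2$; hence it suffices to prove $V^T\Rightarrow X$ and deduce $N^T=\int_0^\cdot V^T_u\,du\Rightarrow\int_0^\cdot X_u\,du$. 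Tightness of $N^T$ itself is free, since it is nondecreasing with jumps of size $T^{-2}\to0$ and $\E[N^T_1]$ stays bounded; the real content is the identification of the limit of $V^T$.

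Next I would solve the renewal equation for the intensity. Substituting $dZ^T_s=\lambda^T_s\,ds+dM^T_s$ into $\lambda^T_t=\mu+\int_0^{t-}\varphi^T(t-s)\,dZ^T_s$ and convolving with the resolvent $\Psi^T$ of \eqref{PSI2} (using $\Psi^T=\varphi^T+\varphi^T\star\Psi^T$) gives
\begin{equation*}
\lambda^T_t=\mu+\mu\int_0^t\Psi^T(s)\,ds+\int_0^{t-}\Psi^T(t-s)\,dM^T_s.
\end{equation*}
Rescaling and dividing by $T$, the deterministic part $g^T_t:=\tfrac1T\bigl(\mu+\mu\int_0^{Tt}\Psi^T(s)\,ds\bigr)$ converges, by Lemma \ref{phito}, to $\E[X_t]=\tfrac{\mu}{\lambda}(e^{\lambda t/m}-1)$, while the stochastic part is $\tfrac1T\int_0^{Tt-}\Psi^T(Tt-s)\,dM^T_s$. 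The key device is to replace $\tfrac1T\Psi^T(Tt-s)$ by the exponential kernel $\tfrac1{mT}e^{\rho_T(Tt-s)}$ furnished by Lemma \ref{phito}, where $\rho_T$ denotes the Malthusian growth rate with $T\rho_T\to\lambda/m$. For the pure exponential kernel the stochastic convolution equals $\tfrac1{mT}e^{\rho_T Tt}\mathcal N^T_t$ with $\mathcal N^T_t:=\int_0^{Tt-}e^{-\rho_T s}\,dM^T_s$ a time-changed martingale, so the approximating process $\hat V^T_t:=g^T_t+\tfrac1{mT}e^{\rho_T Tt}\mathcal N^T_t$ genuinely satisfies the linear stochastic differential equation
\begin{equation*}
d\hat V^T_t=\bigl[(g^T_t)'+T\rho_T(\hat V^T_t-g^T_t)\bigr]dt+\frac1{mT}\,dM^T_{Tt},
\end{equation*}
whose drift converges to $\tfrac{\mu}{m}+\tfrac{\lambda}{m}\hat V^T_t$, since $(g^T_t)'=\mu\Psi^T(Tt)\to\tfrac{\mu}{m}e^{\lambda t/m}$ and $T\rho_T g^T_t\to\tfrac{\lambda}{m}\E[X_t]$ force $(g^T_t)'-T\rho_T g^T_t\to\mu/m$.

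Then I would run the diffusion approximation for the regular process $\hat V^T$. Using the a priori second-moment bounds on $\lambda^T$ that follow from the representation above (giving $\sup_T\sup_{t\le1}\E[(V^T_t)^2]<\infty$), I would prove joint tightness of $(\hat V^T,N^T,\tfrac1{mT}M^T_{T\cdot})$ and that $V^T-\hat V^T\to0$ uniformly on $[0,1]$ in probability. Any limit point $(\hat V,N,W)$ then satisfies $N_t=\int_0^t\hat V_s\,ds$ and $d\hat V_t=(\tfrac{\mu}{m}+\tfrac{\lambda}{m}\hat V_t)\,dt+dW_t$ with $W$ a continuous martingale; since $[\tfrac1{mT}M^T_{T\cdot}]_t=\tfrac1{m^2T^2}Z^T_{Tt}=\tfrac1{m^2}N^T_t\to\tfrac1{m^2}\int_0^t\hat V_s\,ds$, the bracket of $W$ is $\tfrac1{m^2}\int_0^\cdot\hat V_s\,ds$, so by the martingale representation theorem $W_t=\tfrac1m\int_0^t\sqrt{\hat V_s}\,dB_s$ for some Brownian motion $B$. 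Thus every limit point solves the CIR equation of the statement; as that equation has a pathwise unique nonnegative strong solution (Yamada–Watanabe, the drift being affine and $\tfrac1m\sqrt{\cdot}$ being $1/2$-Hölder), the limit is unique, yielding $V^T\Rightarrow X$ and hence $Z^T_{T\cdot}/T^2\Rightarrow\int_0^\cdot X_u\,du$.

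The hard part will be the replacement of the true resolvent kernel by its exponential surrogate inside the stochastic convolution: I must show $\tfrac1T\int_0^{Tt-}\bigl(\Psi^T(Tt-s)-\tfrac1m e^{\rho_T(Tt-s)}\bigr)\,dM^T_s\to0$ uniformly in $t\in[0,1]$, which requires upgrading the pointwise approximation of Lemma \ref{phito} to an $L^2$ control of the whole convolution via $\E|\!\int\!\cdots dM^T|^2=\int(\cdots)^2\,\E[\lambda^T_s]\,ds$ together with the explosive bound $\E[\lambda^T_s]\lesssim e^{\rho_T s}$. Equally delicate is the self-consistent character of the limiting quadratic variation, whose bracket is itself $\tfrac1{m^2}\int_0^\cdot\hat V$; this is precisely what forces the use of the martingale representation theorem rather than a direct functional central limit theorem, and what produces the $\sqrt{X}$ diffusion of the CIR limit.
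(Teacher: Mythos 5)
Your route is genuinely different from the paper's: you work at the level of the rescaled intensity through the resolvent representation $\lambda^T_t=\mu+\mu\int_0^t\Psi^T(s)\,ds+\int_0^{t-}\Psi^T(t-s)\,dM^T_s$ and an exponential surrogate for $\Psi^T$, which is essentially the Jaisson--Rosenbaum scheme that this paper explicitly sets out to avoid; the paper instead keeps the equation in integrated form, convolves it with a smooth test function $K$, proves $B^T_t=\frac{1}{\sqrt T}\int_0^{tT}\lambda_s^{-1/2}\,dM^T_s\Rightarrow B$, and passes to the limit in the convolved identity. Much of your algebra is sound: the resolvent identity, the linear SDE for $\hat V^T$, the computation $(g^T_t)'-T\rho_T g^T_t\to\mu/m$, the identification of the limiting bracket as $\frac{1}{m^2}\int_0^\cdot\hat V_s\,ds$, the use of martingale representation, and Yamada--Watanabe uniqueness are all correct in outline.

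However, the step you yourself flag as the hard part is a genuine gap, and as you propose to close it, it cannot be closed under the paper's hypotheses. Lemma \ref{phito} is \emph{not} a pointwise approximation: it gives $\Psi^T(Tx)\to\frac1m e^{\lambda x/m}$ only in the weak sense (tested against functions of $x$). Your replacement step needs strictly more. Writing $\delta^T(u)=\Psi^T(u)-\frac1m e^{\rho_T u}$, the It\^{o} isometry gives
\begin{equation*}
\E\Big[\Big(\frac1T\int_0^{Tt-}\delta^T(Tt-s)\,dM^T_s\Big)^2\Big]
=\frac1{T^2}\int_0^{Tt}\delta^T(Tt-s)^2\,\E[\lambda^T_s]\,ds ,
\end{equation*}
and since $\E[\lambda^T_{Ts}]=\mu+\mu T\int_0^s\Psi^T(Tv)\,dv$ is of order $T$ (not of order $e^{\rho_T s}$ with a $T$-free constant, as you wrote), the right-hand side is comparable to $\int_0^t\big(\Psi^T(Tv)-\frac1m e^{T\rho_T v}\big)^2\,dv$, with no spare factor of $T$. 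So you need $L^2_{loc}$ convergence of the rescaled renewal density, which weak convergence does not imply; under $(\Lambda)$ and $H(\lambda)$ alone ($\varphi\in L^1$ with finite first moment, no boundedness or regularity), $\Psi^T(T\cdot)$ may oscillate and this strengthening is unavailable --- the same defect already invalidates your pointwise claim $(g^T_t)'=\mu\Psi^T(Tt)\to\frac\mu m e^{\lambda t/m}$. The repair is to compare only \emph{after} integrating in time: by stochastic Fubini, $\int_0^t E^T_s\,ds=\frac1{T^2}\int_0^{Tt}\big(\int_0^{Tt-u}\delta^T(r)\,dr\big)\,dM^T_u$, and $\frac1T\int_0^{Tx}\delta^T(r)\,dr\to0$ boundedly by Lemma \ref{phito} alone, so the integrated error vanishes in $L^2$; i.e.\ run your martingale-problem argument for $N^T=\int_0^\cdot V^T_u\,du$ rather than for $V^T$ itself. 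This is exactly what the paper's convolution with a smooth kernel accomplishes, and it is the reason the paper never needs more than weak convergence of $\Psi^T(T\cdot)$.
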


\subsection{Arrangement of the paper}  Section 2, we study the limit behavior of the important convolution function and of the expectation of the scaled Hawkes processes.  Finally, we prove the limit behavior of the  Hawkes processes.
\vip
In the sequel, $C$ stands for a positive constant whose value may change from line to line. When necessary, we will indicate in subscript the parameters it depends on.

\section{Analysis of the convolution function}
The aim of this section is to provide some auxiliary results on the kernel function $\varphi.$ 
 We are going to illustrate the long-term behavior of the function $\Psi^T$.  The first lemma could be seen as a corollary of \cite[Proposition 2.2]{MR3313750}.
 
 \begin{lemma}\label{lem: cT}
     Let $\{c_T\}_{T\ge 0}$ is a positive sequence with $0< c_T<1$.  Then we have the following convergence for $x\ge 0$ in the weak sense:
\begin{align*}
    \lim_{T\to \infty}\hat \Psi^T(Tx)=&\frac{1}{m}e^{-\frac{\lambda}{m}x},\ \  \textit{if $\lim_{T\to\infty}T(1-c_T)=\lambda>0$  }\\
    \lim_{T\to \infty}\hat \Psi^T(Tx)=&0,\ \textit{ if $\lim_{T\to\infty}T(1-c_T)=+\infty$},
\end{align*}
where $\hat  \Psi^T(t)= \sum_{k=1}^\infty (c_T)^k(\varphi)^{*k}(t).$
 \end{lemma}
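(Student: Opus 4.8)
The plan is to pass to Laplace transforms, where the renewal-type series $\hat\Psi^T$ collapses to a rational expression, and then to exploit the small-argument expansion governed by the first moment $m$. Write $\mL_\varphi(\xi)=\int_0^\infty e^{-\xi t}\varphi(t)\,dt$ for $\xi\ge 0$. Since $(c_T\varphi)^{*k}=c_T^k\varphi^{*k}$ and $\|c_T\varphi\|_1=c_T<1$, the series $\hat\Psi^T=\sum_{k\ge1}(c_T\varphi)^{*k}$ converges in $L^1$ and its Laplace transform is the geometric sum
\begin{equation*}
\mL_{\hat\Psi^T}(\xi)=\sum_{k\ge1}\big(c_T\,\mL_\varphi(\xi)\big)^k=\frac{c_T\,\mL_\varphi(\xi)}{1-c_T\,\mL_\varphi(\xi)},
\end{equation*}
which is well defined because $c_T\,\mL_\varphi(\xi)\le c_T<1$. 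This identifies $\hat\Psi^T$ as the resolvent of the subcritical kernel $c_T\varphi$, so the statement is precisely \cite[Proposition 2.2]{MR3313750} with $a_T$ replaced by $c_T$; I nonetheless sketch the self-contained computation.

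Next I would rescale time. For the scaled function $x\mapsto\hat\Psi^T(Tx)$ and any $\xi>0$,
\begin{equation*}
\int_0^\infty e^{-\xi x}\hat\Psi^T(Tx)\,dx=\frac{1}{T}\,\mL_{\hat\Psi^T}\!\Big(\frac{\xi}{T}\Big)=\frac{c_T\,\mL_\varphi(\xi/T)}{T\big(1-c_T\,\mL_\varphi(\xi/T)\big)}.
\end{equation*}
Here I invoke assumption $(\Lambda)$: one has $\mL_\varphi(0)=\|\varphi\|_1=1$, and since $\int_0^\infty s\varphi(s)\,ds=m<\infty$ the transform is differentiable at $0$ with $\mL_\varphi'(0)=-m$, whence $\mL_\varphi(\xi/T)=1-m\xi/T+o(1/T)$ as $T\to\infty$. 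Substituting and using $c_T\to1$ gives
\begin{equation*}
T\big(1-c_T\,\mL_\varphi(\xi/T)\big)=T(1-c_T)+c_T m\xi+o(1)\longrightarrow \lambda+m\xi
\end{equation*}
under $T(1-c_T)\to\lambda$, while the numerator tends to $1$; hence the rescaled Laplace transform converges to $(\lambda+m\xi)^{-1}=\mL\big[\tfrac{1}{m}e^{-\lambda x/m}\big](\xi)$. Under $T(1-c_T)\to\infty$ the same display forces the denominator to diverge, so the limit is $0$.

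Finally I would upgrade pointwise convergence of Laplace transforms to weak convergence. Because $\int_0^\infty\hat\Psi^T(Tx)\,dx=T^{-1}c_T/(1-c_T)$ tends to $1/\lambda$ (resp. to $0$), the finite measures $\hat\Psi^T(Tx)\,dx$ have bounded total mass and no mass escapes to infinity, so the continuity theorem for Laplace transforms of measures on $[0,\infty)$ yields the asserted weak convergence to $\tfrac{1}{m}e^{-\lambda x/m}\,dx$ (resp. to the zero measure). The main obstacle is the middle step: making the expansion $\mL_\varphi(\xi/T)=1-m\xi/T+o(1/T)$ rigorous and uniform enough that multiplication by $T$ preserves the error term, which is exactly where the finiteness of the first moment $m$ is essential — without it $\mL_\varphi$ need not be differentiable at the origin and the correct scaling would change.
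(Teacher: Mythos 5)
Your proposal is correct, and it takes a route that is structurally parallel to the paper's but runs on different machinery. The paper argues probabilistically: it normalizes $\hat\Psi^T(T\cdot)$ into the probability density $\rho^T$ of $X^T=\frac1T\sum_{i=1}^{I^T}X_i$, where the $X_i$ are i.i.d.\ with density $\varphi$ and $I^T$ is an independent geometric variable of parameter $1-c_T$, computes the characteristic function $\hat\rho^T(z)=\frac{(1-c_T)\hat\varphi(z/T)}{1-c_T\hat\varphi(z/T)}$, and concludes via L\'evy's continuity theorem that $X^T$ converges in law to an $\mathcal{E}(\lambda/m)$ variable (resp.\ to $0$), reinstating the mass factor $c_T/(T(1-c_T))\to 1/\lambda$ (resp.\ $0$) at the end. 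You keep the measures un-normalized, compute their Laplace transforms through the same geometric-series (resolvent) identity, and finish with the continuity theorem for Laplace transforms of finite measures on $[0,\infty)$. Both proofs rest on exactly the same two pillars --- the rational form of the transform of $\hat\Psi^T$ and the first-order expansion of the transform of $\varphi$ at the origin, which is where $(\Lambda)$ enters --- so the difference is packaging, but each packaging has a payoff. Yours makes the key expansion essentially free: $\mathcal{L}_\varphi(\xi/T)=1-m\xi/T+o(1/T)$ follows from dominated convergence because $0\le T(1-e^{-\xi t/T})/\xi\le t$ is dominated by $t\varphi(t)\in L^1$, so the step you flag as the main obstacle is actually easier on the Laplace side than on the Fourier side (where one quotes differentiability of $\hat\varphi$ under a first moment, as the paper does). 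The price is that, your measures not being probability measures, you must control total masses to upgrade vague to weak convergence, which you do via $\int_0^\infty\hat\Psi^T(Tx)\,dx=c_T/(T(1-c_T))\to1/\lambda$ (resp.\ $0$), matching the mass of the limit. One phrasing caveat: when $T(1-c_T)\to\infty$ the hypothesis does not force $c_T\to1$ (take $c_T\equiv1/2$), so the words ``using $c_T\to1$'' are legitimate only in the $\lambda$-case, where they are automatic; in the $\infty$-case your display still yields divergence of the denominator, since $T(1-c_T\mathcal{L}_\varphi(\xi/T))\ge T(1-c_T)\to\infty$ while the numerator is at most $1$, so nothing breaks --- the paper's own proof contains the same harmless slip.
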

 
\begin{proof}
We define a random variable
\begin{align*}
    X^T:= \frac{1}{T} \sum_{i=1}^{I^T} X_i,
\end{align*}
where $(X_i)$ are i.i.d. random variables with density $\varphi$ and $I^T$ is a geometric
random variable independent of $(X_i)$ with parameter $1-c_T$. We denote the density function of $X^T$ by $\rho^T(x)$, which is of the form, for $x\ge0$, $$\rho^T(x)=T\frac{\hat  \Psi^T(Tx)}{\|\hat  \Psi^T\|_1}.$$ 
Indeed,  recalling $\hat  \Psi^T(t)= \sum_{k=1}^\infty (c_T)^k(\varphi)^{*k}(t),$ we can easily compute that $\|\hat  \Psi^T\|_1=\frac{c_T}{1-c_T}$ and have
\begin{align*}
\p(X^T\le x)&=\sum_{k=1}^\infty\p\left(\sum_{i=1}^k X_i\le Tx\right)\p(I^T=k)\\
&=\sum_{k=1}^\infty \int_0^{Tx} \varphi^{*k}(u)du(1-c_T)c_T^{k-1}\\
&=\frac{1-c_T}{c_T}\int_0^{Tx} \hat \Psi^T(u)du.
\end{align*}
Thus, we have
\begin{align*}
\hat  \Psi^T(Tx)=\rho^T(x)\frac{\|\hat  \Psi^T\|_1}{T}=\rho^T(x)\frac{c_T}{(1-c_T)T}.
\end{align*}
In order to compute the limit of $\hat  \Psi^T(Tx)$, it suffices to compute the limit of $\rho^T(x)$.
We thus look at the characteristic function of $X^T$, which is the Fourier transform of $\rho^T(x)$, denoted by $\hat{\rho}^T$, satisfying
\begin{align*}
\hat{\rho}^T(z)&=\E[e^{izX^T}]\\
&=\sum_{k=1}^\infty\E[e^{i(z/T)\sum_{i=1}^kX_i}]P(I^T=k)\\
&=\sum_{k=1}^\infty(1-c_T)(c_T)^{k-1}\E[e^{i(z/T)\sum_{i=1}^kX_i}]\\
&=\sum_{k=1}^\infty(1-c_T)(c_T)^{k-1}\Big(\hat\varphi(z/T)\Big)^{k}\\
&=\frac{\hat\varphi(z/T)}{1-(c_T/(1-c_T))(\hat\varphi(z/T)-1)},
\end{align*}
where $\hat\varphi$ is the characteristic function of $X_1$. The function $\hat\varphi$ is continuously differentiable  with that $\hat\varphi'(0)=i\E(X_1)=im$ because $\varphi$ is the density of $X_1$ and that $\E(X_1)=\int_0^\infty s\varphi(s)ds=m>0$. Therefore, using that $\lim_{T\to\infty}c_T=\lim_{T\to\infty}\hat\varphi(z/T)=\hat\varphi(0)=1$, we have 
\begin{align*}
\lim_{T\to\infty}\hat{\rho}^T(z)&=\lim_{T\to\infty}\frac{\hat\varphi(z/T)}{1-(c_T/(1-c_T))(\hat\varphi(z/T)-1)}\\
&=\lim_{T\to\infty}\frac{\hat\varphi(z/T)}{1-(zc_T/T(1-c_T))(\hat\varphi(z/T)-1)/(z/T)}\\
&=\lim_{T\to\infty}\frac{1}{1-imz/T(1-c_T)}.
\end{align*}
If $\lim_{T\to\infty}T(1-c_T)=\lambda> 0$, then 
\[\lim_{T\to\infty}\hat{\rho}^T(z)=\frac{1}{1-i(m/\lambda)z},\]
the  right hand side  is nothing but the characteristic function of the exponential distribution  with parameter $\lambda/m$. Hence,  by the continuity theorem, we conclude that $X^T\stackrel{(d)}\to X$ with $X\sim \mathcal{E}(\lambda/m)$ so that 
\[\lim_{T\to\infty}\hat  \Psi^T(Tx)=\frac{1}{\lambda} \frac{\lambda}{m}e^{-\frac{\lambda}{m}x}=\frac{1}{m}e^{-\frac{\lambda}{m}x}.\]
If $\lim_{T\to\infty}T(1-c_T)=+\infty$, then 
\[\lim_{T\to\infty}\hat{\rho}^T(z)=1,\] 
which means that $X^T$ converges in law to $0$.  We thus conclude $\lim_{T\to\infty}\hat  \Psi^T(Tx)=0$.
 
\end{proof}

In the next lemma, we will see that $\Psi^T$ with the sequence $a_T>1$ has the same property as Lemma \ref{lem: cT} of $\hat \Psi^T$ with $0<c_T<1$. 
\begin{lemma}\label{phito}

 Under the assumption $(\Lambda)$ and $H(\lambda)$, recall $\Psi^T$ defined in \eqref{PSI}, we have the following convergence in the weak sense  for $x\ge 0$:
 $$\lim_{T\to \infty}\Psi^T(Tx)=\frac{1}{m}e^{\frac{\lambda}{m} x}.$$
 
\end{lemma}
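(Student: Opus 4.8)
The plan is to exploit the factorisation $\Psi^T(Tx)=e^{b_TTx}\,\tilde\Psi^T(Tx)$ recorded in \eqref{PSI} and to handle the two factors separately: the prefactor $e^{b_TTx}$ supplies the exponential \emph{growth}, while the tilted kernel $\tilde\Psi^T(Tx)$ is, up to replacing $\varphi$ by a $T$-dependent density, exactly the object treated in Lemma \ref{lem: cT}. The key arithmetic to keep straight is that the prefactor will contribute a rate $2\lambda/m$ and the tilted kernel a rate $-\lambda/m$, whose product recovers the asserted $\tfrac1m e^{\frac\lambda m x}$.

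First I would pin down the Malthusian parameter. Since $\varphi^T=a_T\varphi$, its defining relation $\cint e^{-b_Ts}\varphi^T(s)\,ds=1/a_T$ becomes $\cint e^{-b_Ts}\varphi(s)\,ds=1/a_T^2$. Writing $F(b)=\cint e^{-bs}\varphi(s)\,ds$, assumption $(\Lambda)$ gives $F(0)=1$ and $F'(0^+)=-m$, hence $F(b)=1-mb+o(b)$ as $b\downarrow0$; comparing with $1/a_T^2=1-2(a_T-1)+o(a_T-1)$ yields $b_T=\frac{2(a_T-1)}{m}(1+o(1))$. This is precisely where the first moment $m$ enters, and under $H(\lambda)$ it gives $Tb_T\to \frac{2\lambda}{m}$, so that $e^{b_TTx}\to e^{\frac{2\lambda}{m}x}$ uniformly on compact subsets of $[0,\infty)$.

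Next I would treat $\tilde\Psi^T(Tx)$ by mirroring the proof of Lemma \ref{lem: cT}. Because $\|\tilde\varphi^T\|_1=1/a_T$, set $\tilde c_T:=1/a_T\in(0,1)$ and let $g_T(s):=a_T\tilde\varphi^T(s)=a_T^2e^{-b_Ts}\varphi(s)$, which is a genuine probability density; by \eqref{phipsi} one has $\tilde\Psi^T=\sum_{n\ge1}\tilde c_T^{\,n}g_T^{*n}$, the same structure as $\hat\Psi^T$ but with density $g_T$ and with $T(1-\tilde c_T)=T(a_T-1)/a_T\to\lambda$. Introducing $Y^T=\frac1T\sum_{i=1}^{\tilde I^T}Y_i$ with $Y_i$ i.i.d.\ of density $g_T$ and $\tilde I^T$ geometric of parameter $1-\tilde c_T$, its characteristic function is $\hat\rho_T(z)=\hat g_T(z/T)\big/\bigl[1-\tfrac{\tilde c_T}{1-\tilde c_T}(\hat g_T(z/T)-1)\bigr]$. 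The single new feature compared with Lemma \ref{lem: cT} is that $g_T$ depends on $T$; to control it I would use the $T$-uniform bound $s\,g_T(s)\le C\,s\varphi(s)$, integrable by $(\Lambda)$, so that dominated convergence gives both $\cint s\,g_T(s)\,ds\to m$ and $\frac{\hat g_T(z/T)-1}{z/T}\to im$. Hence $\hat\rho_T(z)\to(1-i(m/\lambda)z)^{-1}$, the characteristic function of $\mathcal E(\lambda/m)$; unwinding the normalisation $\tilde\Psi^T(Tx)=\rho_T(x)/[T(a_T-1)]$ exactly as in Lemma \ref{lem: cT} and using \eqref{norm} gives $\tilde\Psi^T(Tx)\to\frac1m e^{-\frac\lambda m x}$ weakly.

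It remains to recombine the factors, where the only care needed is that weak convergence of $\tilde\Psi^T(T\cdot)$ survives multiplication by the diverging prefactor. For $f$ continuous with compact support, write $\int f(x)\Psi^T(Tx)\,dx=\int f(x)e^{b_TTx}\,\nu_T(dx)$ with $\nu_T(dx)=\tilde\Psi^T(Tx)\,dx$; the total mass $\nu_T([0,\infty))=\frac1T\|\tilde\Psi^T\|_1=\frac1{T(a_T-1)}\to\frac1\lambda$ stays bounded, while $f(x)e^{b_TTx}\to f(x)e^{\frac{2\lambda}{m}x}$ uniformly by the second step. Splitting $\int\bigl(f(x)e^{b_TTx}-f(x)e^{\frac{2\lambda}{m}x}\bigr)\,\nu_T(dx)+\int f(x)e^{\frac{2\lambda}{m}x}\,\nu_T(dx)$ and combining the uniform convergence against the bounded masses with the weak limit $\nu_T\to\frac1m e^{-\frac\lambda m x}dx$ yields $\Psi^T(Tx)\to\frac1m e^{\frac\lambda m x}$. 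I expect the main obstacle to be the third step, namely running the characteristic-function argument with a density $g_T$ that itself varies with $T$; the saving grace is the $T$-uniform domination $g_T\le C\varphi$, which confines all $T$-dependence to the scalars $\tilde c_T$ and $\cint s\,g_T(s)\,ds$. (An alternative avoiding the tilt is to compute $\frac1T\mathcal L[\Psi^T](\theta/T)=\frac1T\frac{a_T\mathcal L[\varphi](\theta/T)}{1-a_T\mathcal L[\varphi](\theta/T)}\to\frac1{m\theta-\lambda}$ for $\theta>\lambda/m$ directly from \eqref{PSI2}, and to recognise the limit as the Laplace transform of $\tfrac1m e^{\frac\lambda m x}$.)
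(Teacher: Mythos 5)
Your proposal is correct and follows essentially the same route as the paper: extract $Tb_T\to 2\lambda/m$ from the first-order expansion of the Laplace transform of $\varphi$ at $0$, run the Lemma \ref{lem: cT} argument on the tilted kernel to get $\Tilde\Psi^T(Tx)\to\frac1m e^{-\frac{\lambda}{m}x}$ weakly, and recombine through the factorisation \eqref{PSI}. In fact you are more careful than the paper at the two delicate points: the paper invokes Lemma \ref{lem: cT} directly even though the underlying density in the application, $g_T(s)=a_T^2e^{-b_Ts}\varphi(s)$, depends on $T$ (you rerun the characteristic-function computation with the $T$-uniform domination $g_T\le C\varphi$, which is exactly what is needed), and the paper multiplies the weak limit of $\Tilde\Psi^T(T\cdot)$ by the diverging prefactor $e^{b_TTx}$ as if the convergence were pointwise, whereas your bounded-mass splitting argument against test functions is the correct justification; your parenthetical Laplace-transform alternative would also work and would bypass the tilting entirely.
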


\begin{proof}
Let's first recall  that the positive sequence $\{b_T\}_{T\ge 0}$ satisfy
\begin{align*}
    \int_0^\infty e^{-b_T s}\varphi^T(s)ds=\frac{1}{a_T}
\end{align*}
and  define 
\begin{align*}
    m_T:=\int_0^\infty e^{-b_T s}s\varphi^T(s)ds.
\end{align*}
Since $\clim a_T=1,$  then $\{b_T\}_{T\ge 0}$ must satisfy $\clim b_T=0.$  Hence, by the dominated convergence theorem, we have  $\clim m_T=m$  and then $\clim \Big(a_T-\frac{1}{a_T}\Big)/b_T=m.$ Indeed,
\begin{align*}
  \clim \Big(a_T-\frac{1}{a_T}\Big)/b_T&=\clim a_T\int_0^\infty(1- e^{-b_T s})\varphi(s)/b_Tds\\
  &=\int_0^\infty\clim(1- e^{-b_T s})\varphi(s)/b_Tds\\
  &=\cint s\varphi(s) ds=m.
\end{align*}
Due to $\clim T(a_T-1)=\lambda,$  we thus conclude that $\clim Tb_T=2\lambda/m.$
Note that $\cint s\Tilde \varphi^T(s) ds=m_T$ and  recall \eqref{phipsi} that 
 $$
 \Tilde \varphi^T(t)=e^{-b_T t}\varphi^T(t)\ \hbox{and}\  \Tilde \Psi^T(t)=\sum_{n\ge 1}(\Tilde \varphi^T)^{*n}(t).
 $$
  As a result of Lemma  \ref{lem: cT}, we have 
\begin{align*}
    \lim_{T\to \infty}\Tilde \Psi^T(Tx)=\frac{1}{m}e^{- \frac{\lambda x}{m}}.
\end{align*}
Consequently, according to $\Psi^T(t)= e^{b_T t}   \Tilde \Psi^T(t)$ defined in \eqref{PSI}, 
$$
\clim \Psi^T(Tx)= \clim  e^{Tb_T x}\Tilde \Psi^T(Tx)=e^{\frac{2\lambda}{m} x}\frac{1}{m}e^{- \frac{\lambda x}{m}}=\frac{1}{m}e^{\frac{\lambda}{m} x},
$$
which accomplishes the proof.

\end{proof}

We now recall a classical lemma,  see \cite[Lemma 3]{MR3054533} and \cite[Lemma 8]{MR3499526}. For the reader's convenience, we will sketch the proof. We also review the convention that  $\varphi^{*0}(t-s)ds=\delta_t(ds)$. 
\begin{lemma}\label{classic}
Let $A$ be a constant. Consider $f, g: [0,\infty)\mapsto\R$ locally bounded and assume that $\varphi: [0,\infty)\mapsto[0,\infty)$ is locally integrable. If $f_t=g_t+\int_0^t \varphi(t-s)Af_s ds$ for all $t\ge0$,
then 
\[f_t=\sum_{n\ge0}\int_0^t \varphi^{*n}(t-s)A^ng_sds.\]
\end{lemma}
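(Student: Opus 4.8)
The plan is to treat the identity $f_t = g_t + A(\varphi\star f)_t$ as a linear Volterra equation, solve it by Picard iteration, and recognise the resulting Neumann series as the claimed resolvent expansion; the only genuine issue will be controlling the iterated remainder, for which the exponential–tilting device already used to define $\tilde\varphi^T$ in \eqref{phipsi} is exactly what is needed. First I would substitute the equation into itself. Writing $\varphi^{*0}\star g := g$ in accordance with the stated convention $\varphi^{*0}(t-s)\,ds=\delta_t(ds)$, and using associativity of convolution (justified by Fubini, since $f,g$ are locally bounded and $\varphi$ is locally integrable), a straightforward induction gives, for every $N\ge0$ and all $t\ge0$,
\begin{equation*}
f_t=\sum_{n=0}^{N}A^{n}(\varphi^{*n}\star g)_t+A^{N+1}(\varphi^{*(N+1)}\star f)_t.
\end{equation*}
It therefore suffices to show that the remainder $R_N(t):=A^{N+1}(\varphi^{*(N+1)}\star f)_t$ tends to $0$ as $N\to\infty$ for each fixed $t$; the asserted series representation, together with its convergence, then follows automatically, since the partial sums equal $f_t-R_N(t)$.

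To estimate $R_N$, set $M_t:=\sup_{s\le t}|f_s|<\infty$ and $u_n(t):=\int_0^t\varphi^{*n}(u)\,du$, so that $|R_N(t)|\le|A|^{N+1}M_t\,u_{N+1}(t)$. The main obstacle is that the naive factorial bound $u_n(t)\le\big(\int_0^t\varphi\big)^n/n!$ is false for general kernels — concentrated kernels make $u_n(t)$ comparable to $\big(\int_0^t\varphi\big)^n$ — so in general one only has geometric control, and one cannot afford a geometric factor exceeding $1$. The clean remedy is exponential tilting: since everything on $[0,T]$ depends only on $\varphi\,\indiq_{[0,T]}\in L^1$, I may assume $\varphi$ integrable, and for $\beta>0$ set $\varphi_\beta(u):=e^{-\beta u}\varphi(u)$. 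A one-line induction shows $(\varphi_\beta)^{*n}(u)=e^{-\beta u}\varphi^{*n}(u)$, whence
\begin{equation*}
u_n(t)=\int_0^t e^{\beta u}(\varphi_\beta)^{*n}(u)\,du\le e^{\beta t}\,\|\varphi_\beta\|_1^{\,n}.
\end{equation*}

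Since $\|\varphi_\beta\|_1=\int_0^\infty e^{-\beta u}\varphi(u)\,du\to0$ as $\beta\to\infty$ by dominated convergence, I would fix $\beta$ so large that $|A|\,\|\varphi_\beta\|_1<1$. Then $|R_N(t)|\le M_t\,e^{\beta t}\,\big(|A|\,\|\varphi_\beta\|_1\big)^{N+1}\to0$, which completes the argument; the very same bound shows that $\sum_{n\ge0}A^{n}(\varphi^{*n}\star g)_t$ converges absolutely and is locally bounded. This is precisely the mechanism that renders $\tilde\Psi^T$ finite in \eqref{norm}, so the whole proof sits comfortably within the framework already established in the paper.
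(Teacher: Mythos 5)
Your proof is correct, and it takes a genuinely different route from the paper's. The paper splits the claim into uniqueness plus verification: it first invokes the generalized Gronwall lemma \cite[Lemma 23-(i)]{MR3449317} to show that the Volterra equation has at most one locally bounded solution, then checks algebraically that the series $\sum_{n\ge0}A^n(\varphi^{*n}\star g)$ solves the equation, and finally cites \cite[Lemma 23-(ii)]{MR3449317} (applied to $u^n_t=|A|^n\int_0^t\varphi^{*n}(s)\,ds$) to guarantee that this series is locally bounded, so that uniqueness applies to it. You instead substitute the equation into itself $N$ times and show that the remainder $A^{N+1}(\varphi^{*(N+1)}\star f)$ vanishes as $N\to\infty$; this yields the representation (and uniqueness as a byproduct) in one stroke, with no external input. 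Your key technical step --- truncating $\varphi$ to $[0,T]$ and tilting, $\varphi_\beta=e^{-\beta\cdot}\varphi$, with $\beta$ chosen so large that $|A|\,\|\varphi_\beta\|_1<1$ --- is a self-contained substitute for both cited lemmas, and it correctly identifies the real difficulty: $\int_0^t\varphi^{*n}$ in general decays only geometrically (not factorially) in $n$, with a ratio that may exceed $1/|A|$, since $\varphi$ is merely locally integrable. What each approach buys: the paper's proof is shorter on the page but outsources the analytic content to \cite{MR3449317}; yours is self-contained and, pleasingly, reuses the exponential-tilting mechanism the paper itself employs to define $\tilde\varphi^T$ and the Malthusian parameter $b_T$ in \eqref{phipsi}--\eqref{norm}, so it sits naturally within the paper's framework. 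Both arguments use the hypothesis that $f$ is locally bounded in an essential way (yours through $M_t=\sup_{s\le t}|f_s|$ in the remainder bound, the paper's through the uniqueness class).
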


\begin{proof} For unknown $f$, we prove that the equation $f_t=g_t+\int_0^t \varphi(t-s)Af_s ds$ has a unique locally bounded solution. Given $f,\tilde f$ are two solutions,  then we find that $h=|f-\tilde f|$ admits $h_t\le |A|\int_0^t \varphi(t-s) h_s ds$, by the generalized Gronwall lemma, see \cite[Lemma 23-(i)]{MR3449317},  we deduce that $h=0$. We thus need to verify that $f_t=\sum_{n\ge0}\int_0^t \varphi^{*n}(t-s)A^ng_sds$ is locally bounded and solves the equation $f_t=g_t+\int_0^t \varphi(t-s)Af_s ds$. It's easy to check that $f_t=\sum_{n\ge0}\int_0^t \varphi^{*n}(t-s)A^ng_sds$ solves the equation. In fact, using the property of convolution, we write
\begin{align*}
f_t&=g_t+\sum_{n\ge1}A^nc\star g(t)\\
&=g_t+\sum_{n\ge1} A^{n-1}\varphi^{*n-1}\star(A\varphi)\star g(t)\\
&=g_t+(A\varphi)\star \sum_{n\ge0} A^{n}\varphi^{*n}\star g(t)=g_t+(A\varphi)\star f(t).
\end{align*}
Finally, set $u_t^n=|A|^n \int_0^t\varphi^{*n}(s)ds$, which is locally bounded because
$\varphi$ is locally integrable and which satisfies $u_t^{n+1}\le |A|\int_0^t u_s^n\varphi(t-s)ds$. By \cite[Lemma 23-(ii)]{MR3449317}, we conclude that $\sum_{n=0}^\infty u_t^n$ is locally bounded. Whence, $f_t$ is locally bounded because $g$ is locally bounded.
\end{proof}

Next, we are trying to estimate the expectation of the Hawkes process, which is crucial to prove the tightness.

\begin{prop}\label{estiZ}
Consider the solution  $(Z_t^T)_{t\ge0}$ to \eqref{sssy}.
\vip
 (i) Under the assumption $(\Lambda)$ and $H(\lambda)$, we have for  $0\le x\le 1$
\begin{align*}
    \clim \E[Z_{Tx}^{T}]/T^2&=\frac{\mu e^{\frac{\lambda}{m}x}}{m}\int_0^x ve^{-\frac{\lambda}{m}v}dv =-\frac{\mu}{\lambda}+\frac{m\mu}{\lambda^2}\Big(e^{\frac{\lambda}{m}x}-1\Big).
\end{align*}

\vip
(ii) Under the assumption $(\Lambda)$ and $H(\infty)$,  we have
\begin{align*}
    \sup_{x\in[0,1]}\frac{a_T-1}{Te^{b_TTx}}\E[Z_{Tx}^{T}]\le \mu  a_T.
\end{align*}
Especially,
\begin{align*}
    \sup_{t\in[0,T]}\frac{a_T-1}{Te^{b_Tt}}\E[Z_{t}^{T}]\le \mu  a_T.
\end{align*}
\end{prop}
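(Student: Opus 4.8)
The plan is to derive an exact formula for $\E[Z_t^T]$ and then read off both asymptotics. Taking expectations in the martingale relation $M_t^T=Z_t^T-\int_0^t\lambda_s^T ds$ associated with \eqref{mart} gives $\E[Z_t^T]=\int_0^t\E[\lambda_s^T]ds$, while taking expectations in the definition of $\lambda^T$ in \eqref{sssy}, using that $\int_0^\cdot\lambda_s^T ds$ compensates $Z^T$, yields the renewal equation
\begin{align*}
\E[\lambda_t^T]=\mu+\int_0^t\varphi^T(t-s)\E[\lambda_s^T]ds .
\end{align*}
Since $t\mapsto\E[\lambda_t^T]$ is locally bounded, Lemma \ref{classic} with $A=1$, $g\equiv\mu$ and kernel $\varphi^T$ applies; recalling \eqref{PSI2} and the convention $\varphi^{\star0}(t-s)ds=\delta_t(ds)$, it gives $\E[\lambda_t^T]=\mu+\mu\int_0^t\Psi^T(s)ds$. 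Integrating in $t$ and swapping the order of integration by Fubini produces the key identity
\begin{align*}
\E[Z_t^T]=\mu t+\mu\int_0^t(t-s)\Psi^T(s)ds .
\end{align*}

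For part (i) I would put $t=Tx$, substitute $s=Tv$, and divide by $T^2$ to obtain
\begin{align*}
\frac{\E[Z_{Tx}^T]}{T^2}=\frac{\mu x}{T}+\mu\int_0^x(x-v)\Psi^T(Tv)dv .
\end{align*}
The first term vanishes, and since $v\mapsto(x-v)\indiq_{[0,x]}(v)$ is continuous with compact support, the weak convergence $\Psi^T(Tv)\to\frac1m e^{\frac{\lambda}{m}v}$ of Lemma \ref{phito} lets me pass to the limit, giving $\frac{\mu}{m}\int_0^x(x-v)e^{\frac{\lambda}{m}v}dv$. The substitution $w=x-v$ turns this into $\frac{\mu e^{\frac{\lambda}{m}x}}{m}\int_0^x we^{-\frac{\lambda}{m}w}dw$, and one integration by parts yields the announced closed form.

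For part (ii) the idea is to estimate the same integral crudely via the $L^1$ bound \eqref{norm}. Writing $\Psi^T(s)=e^{b_Ts}\Tilde\Psi^T(s)$ and bounding $t-s\le t$ and $e^{b_Ts}\le e^{b_Tt}$ on $[0,t]$ gives
\begin{align*}
\int_0^t(t-s)\Psi^T(s)ds\le te^{b_Tt}\|\Tilde\Psi^T\|_1=\frac{te^{b_Tt}}{a_T-1},
\end{align*}
hence $\E[Z_t^T]\le\mu t+\frac{\mu te^{b_Tt}}{a_T-1}$. Multiplying by $\frac{a_T-1}{Te^{b_Tt}}$ and using $t\le T$ and $e^{b_Tt}\ge1$ bounds the two resulting terms by $(a_T-1)\mu$ and $\mu$, whose sum is $\mu a_T$; the estimate is uniform over $t\in[0,T]$, equivalently $x\in[0,1]$, so it delivers both displayed inequalities simultaneously.

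The step I expect to be delicate is the interchange of limit and integral in part (i). Lemma \ref{phito} provides only weak convergence, and because the limiting density $\frac1m e^{\frac{\lambda}{m}v}$ is not integrable on $[0,\infty)$ I must work on the compact window $[0,x]$ and test against the continuous compactly supported function $(x-v)\indiq_{[0,x]}(v)$. I also have to absorb the factor $e^{Tb_Ts}$ coming from $\Psi^T=e^{b_T\cdot}\Tilde\Psi^T$, which converges to $e^{\frac{2\lambda}{m}\cdot}$ only \emph{uniformly on compacts}; combining this with the boundedness $\int_0^x\Tilde\Psi^T(Tv)dv\le\frac1{T(a_T-1)}$, finite under $H(\lambda)$, is what makes the passage rigorous.
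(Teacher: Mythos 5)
Your proof is correct and takes essentially the same approach as the paper: both solve a renewal equation via Lemma \ref{classic} to reach the identity $\E[Z_t^T]=\mu t+\mu\int_0^t(t-s)\Psi^T(s)\,ds$ (the paper writes it equivalently as $\mu t+\mu\int_0^t s\,\Psi^T(t-s)\,ds$), then invoke Lemma \ref{phito} for (i) and the $L^1$ bound \eqref{norm} together with $t\le T$, $e^{b_T t}\ge 1$ for (ii). The only cosmetic difference is that you derive the renewal equation at the level of the intensity $\E[\lambda_t^T]$ with $g\equiv\mu$ and then integrate, whereas the paper works directly with $\E[Z_t^T]$ and $g_t=\mu t$; your explicit justification of the limit--integral interchange in (i), splitting $\Psi^T=e^{b_T\cdot}\Tilde\Psi^T$ and using uniform-on-compacts convergence of the exponential factor, is in fact more careful than the paper's one-line appeal to Lemma \ref{phito}.
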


\begin{proof} The proof is inspired by  \cite[Lemma 11]{MR3499526}, which deals with the multivariate Hawkes processes. We first rewrite the Hawkes process $(Z_t^T)_{t\ge0}$. Starting from \eqref{sssy} and using \eqref{mart}, it's not difficult to find  that 
\[Z_t^T=M_t^T+\int_0^t \lambda^T_s ds=M_t^T+\mu t+\int_0^t \int_0^s \varphi^T(s-v)dZ_v^Tds.\]
Using \cite[Lemma 22]{MR3449317}, we have $\int_0^t \int_0^s \varphi^T(s-v)dZ_v^Tds=\int_0^t  \varphi^T(t-s)Z_s^Tds$. Accordingly,
\begin{align}\label{Zt}
Z_t^T=M_t^T+\mu t+\int_0^t  \varphi^T(t-s)Z_s^Tds.
\end{align}
Since $(M_t^T)_{t\ge0}$ is martingale, we see that $\E[M_t^T]=0$ for all $t\ge0$. Hence, for $t\ge0$, we have 
\begin{align}\label{EZt}
\E[Z_t^T]=\mu t+\int_0^t  \varphi^T(t-s)\E[Z_s^T]ds=\mu t+\int_0^t  a_T\varphi(t-s)\E[Z_s^T]ds.
\end{align}
Applying Lemma \ref{classic} and noting \eqref{PSI2} and  $\varphi^{\star0}(t-s)ds=\delta_t(ds)$, we  thus conclude that 
\begin{align}\label{expZ}
\E[Z_t^T]=\mu\sum_{n=0}^\infty\int_0^t (a_T)^n \varphi^{\star n}(t-s)s\  ds=\mu t+\mu\int_0^t s\Psi^T(t-s)\ ds.
\end{align}
Now letting $t=Tx$ in \eqref{expZ}, we obtain 
\begin{align*}
    \E[Z_{Tx}^{T}]&=\mu Tx+\mu \int_{0}^{Tx}s\Psi^T(Tx-s)ds\\
   & \stackrel{s=Tv}{=}\mu Tx+\mu T^2\int_{0}^{x}v\Psi^T(T(x-v))dv.
    \end{align*}
    Following from Lemma \ref{phito}, we have 
    \begin{align*}
         \clim \E[Z_{Tx}^{T}]/T^2&=\clim\Big[ \frac{\mu x}{T}+\mu \int_{0}^{x}v\Psi^T(T(x-v))dv \Big]\\
       &  =\clim \mu\int_{0}^{x}v\Psi^T(T(x-v))dv\\
       &=\frac{\mu e^{\frac{\lambda}{m}x}}{m}\int_0^x ve^{-\frac{\lambda}{m}v}dv,
    \end{align*}
    which completes the proof of point (i).
    
\vip
    
  For point (ii),  the second inequality is nothing but  concluded by the first one due to  $x\in[0,1]$. We thus  only need to prove the first inequality. Let's first recall \eqref{phipsi} that $\Tilde \Psi^T(x)=\sum_{n\ge 1}(\Tilde \varphi^T)^{*n}(x)$  and \eqref{norm} that $\int_{0}^{\infty}\Tilde \Psi^T(s)ds=\frac{1}{a_T-1}.$ Using  \eqref{expZ} again with $t=Tx$ for $0\le x\le 1,$ 
    \begin{align*}
    \frac{a_T-1}{T}\E[Z_{Tx}^{T}]& = \frac{\mu(a_T-1)}{T} \Big[Tx+\int_{0}^{Tx}se^{b_T(Tx-s)}e^{-b_T(Tx-s)}\Psi^T(Tx-s)ds\Big]\\
    &\le \frac{\mu(a_T-1)}{T}\Big[T+Te^{b_TTx}\int_{0}^{\infty}\Tilde \Psi^T(s)ds\Big] \quad\quad  \text{(Using  $0\le x\le 1$ and  \eqref{PSI})}\\
    & \le  T \frac{\mu(a_T-1)e^{b_TTx}}{T}\Big(1+\frac{1}{a_T-1}\Big)= \mu  a_Te^{b_TTx},
    \end{align*}
    which concludes the proof.  
    
\end{proof}

\section{Proof of the asymptotic behavior of the process}
In this section, we are going to prove the laws of large numbers and the  central limit theorem for the process.
Let's first  give  the
\begin{proof}[Proof of Theorem \ref{thm1}]
Note \eqref{Zt}, \eqref{EZt} and \eqref{PSI2} that $\Psi^T(t)=\sum_{n\ge 1}(\varphi^T)^{*n}(t)$ and apply Lemma \ref{classic}, we have 
\begin{align*}
   Z_{Tx}^{T}-\E[Z_{Tx}^{T}]=\sum_{n\ge0}(a_T)^n\int_{0}^{Tx}\varphi^{*n}(Tx-s)M^T_{s}ds=M^T_{Tx}+\int_{0}^{Tx}\Psi^{T}(Tx-s)M^T_{s}ds.
\end{align*}
Since $ \E[M^{T}_s M^{T}_t]= \E[Z^{T}_{s\land t}].$ Then, it is easy to conclude from Proposition \ref{estiZ}-(ii) that, for any $0\le s,t\le T,$
\begin{align*}
    \E[(M^T_{s\land t})^2]=\E[Z^T_{s\land t}]\le\frac{\mu Ta_T}{a_T-1}e^{b_T s\land t}.
\end{align*}
Also note  \eqref{PSI} that $\Psi^T(t)= e^{b_T t}   \Tilde \Psi^T(t)$, we observe that for any $x\in[0,1]$,
\begin{align*}
    \E\Big[\Big(\int_{0}^{Tx}\Psi^{T}(Tx-s)M^T_{s}ds\Big)^2\Big]=&\E\Big[\int_{0}^{Tx}\int_{0}^{Tx}\Psi^{T}(Tx-s)\Psi^{T}(Tx-t)M^T_{s}M^T_{t}dsdt\Big]\\
    =&\int_{0}^{Tx}\int_{0}^{Tx}\Psi^{T}(Tx-s)\Psi^{T}(Tx-t)\E[M^T_{s}M^T_{t}]dsdt\\
    =&\int_{0}^{Tx}\int_{0}^{Tx}\Psi^{T}(Tx-s)\Psi^{T}(Tx-t)\E[Z^{T}_{s\land t}]dsdt\\
    =&e^{2b_TTx}\int_{0}^{Tx}\int_{0}^{Tx}\Tilde \Psi^{T}(Tx-s)\Tilde \Psi^{T}(Tx-t)e^{-b_T(s+t)}\E[Z^{T}_{s\land t}]dsdt\\
    \le &e^{2b_TTx}\int_{0}^{Tx}\int_{0}^{Tx}\Tilde \Psi^{T}(Tx-s)\Tilde \Psi^{T}(Tx-t)e^{-b_T(s\land t)}\E[Z^{T}_{s\land t}]dsdt\\
    \le& \frac{\mu Ta_T}{a_T-1} e^{2b_TTx}\Big(\int_{0}^{\infty} \Tilde \Psi^{T}(s)ds\Big)^2=\mu Ta_Te^{2b_TTx}(a_T-1)^{-3} .
\end{align*}
Whence, letting $T\to\infty$,
\begin{align*}
   \frac{(a_T-1)^2}{T^2e^{2b_TTx}} \E\Big[\Big(Z_{Tx}^{T}-\E[Z_{Tx}^{T}]\Big)^2\Big]
   \le\frac{Ca_T}{T(a_T-1)}\to 0,
\end{align*}
where  $C$ is a positive constant.
\end{proof}

Next, we prove our second main result related to the central limit theorem.  The idea of the proof comes  from  \cite[Chapter 5.3]{MR3187500}. We  now give  the proof.

\begin{proof}[Proof of Theorem \ref{thin}] We will divide the proof into several steps. 
\vip
{\bf Step 1.} Let's  first rewrite the  cumulated intensity of the Hawkes process.
Recalling  $M_t^T=Z_{t}^{T}-\int_0^t \lambda^T_s ds$ and $\lambda^T_s$ introduced in \eqref{sssy}, we have
\begin{align*}
   \int^{Tt}_0\lambda_{s}^{T}ds&=\mu Tt+\int_{0}^{Tt}\int_{0}^{s}\varphi^T(s-m)dZ_{m}^{T}ds\\
   &=\mu Tt+\int_{0}^{Tt}\int_{0}^{s}\varphi^T(s-m)dM_{m}^{T}ds+\int_{0}^{Tt}\int_{0}^{s}\varphi^T(s-m)\lambda_{m}^{T}dmds.
\end{align*}
Rearranging the above equality, we  then get
$$
\int^{Tt}_0\lambda_{s}^{T}ds-\int_{0}^{Tt}\int_{0}^{s}\varphi^T(s-m)\lambda_{m}^{T}dmds=\mu Tt+\int_{0}^{Tt}\int_{0}^{s}\varphi^T(s-m)dM_{m}^{T}ds.
$$
By Fubini's theorem and then changing the variable $u=s-m$, we have 
\begin{align}\label{lam}
    \int^{Tt}_0\lambda_{s}^{T}ds-\int_{0}^{Tt}\Big(\int_{0}^{Tt-u}\varphi^T(s)ds\Big)\lambda_{u}^{T}du=\mu Tt+\int_{0}^{Tt}\Big(\int_{0}^{Tt-u}\varphi^T(s)ds\Big)dM^T_u.
\end{align}
Define 
$$
\Phi^T(t):=\int_0^t \varphi^T(s) ds,\ \hbox{for}\  t\ge 0.
$$
Then, we have  $ \Phi^T(t)=0\ \hbox{for}\ t<0.$ Thus, the equality 
 (\ref{lam})  can be rewritten equivalently as
\begin{align*}
    \int^{Tt}_0\Big(1-\Phi^T(Tt-u)\Big)\lambda_{u}^{T}du=\mu Tt+\int_{0}^{Tt}\Phi^T(Tt-u)dM^T_u.
\end{align*}
By changing of variables of $u=Ts$,  we get
\begin{align*}
    \int^{t}_0T\Big(1-\Phi^T(T(t-s))\Big)\lambda_{Ts}^{T}ds=\mu Tt+\int_{0}^{t}\Phi^T(T(t-s))dM^T_{Ts}.
\end{align*}
 Putting $B^T_t=\frac{1}{\sqrt{T}}\int^{tT}_0\frac{dM^T_s}{\sqrt{\lambda^T_s}}$ and dividing $T$ on both sides, we can write
\begin{align}\label{convo}
 \int^{t}_0T\Big(1-\Phi^T(T(t-s))\Big)\frac {\lambda_{Ts}^{T}} {T} ds=\mu t+\int_{0}^{t}\Phi^T(T(t-s))\sqrt{\frac{\lambda_{Ts}^{T}}{T}}dB^T_{s}.
\end{align}

\vip

{\bf Step 2.} In this step, we verify that for the Skorohod topology,
\begin{align}\label{Brow}
    \clim ( B^T_{t})_{t\in  [0,1]}\stackrel{(d)}\longrightarrow  (B_t)_{t\in [0,1]},
\end{align}
where $(B_t)_{t\in [0,1]}$ is a Brownian motion.
To prove this, by Jacod-Shiryaev \cite[Theorem VIII-3.11]{MR1943877},
it suffices to verify that, as 
$T\to \infty$,
\vip

(a) The quadratic variation of $(B^T_t)_{t\in[0,1]}$, i.e. $[B^T,  B^T]_t \to t$ in probability, for all $t\in [0,1]$ fixed.

\vip

(b) $\sup_{t \in [0,1]}  |B^T_{t}-B^T_{t-}| \to 0$ in probability.
\vip
It is not difficult to check Point (b): for $t\in[0,1]$, use that the jumps of $M^T$ are always equal to $1$ (because the jumps of $M^T$ is counted by $Z^T$, which are all of size $1 $), and that $\lambda_{t}^{T}$ are always bigger than $\mu>0.$ Hence, we have  $\sup_{t \in [0,1]}  |B^T_{t}-B^T_{t-}|\le \frac{1}{\sqrt{\mu T}}$ which tends to $0$ as $T\to \infty.$ 

Concerning point (a), recall that The quadratic variation  $[M^T, M^T]_t=Z^T_t$ for $t\in[0,1]$.
Then for any $t\in[0,1]$,  we have
\begin{align*}
  [B^T,  B^T]_t = \frac{1}{T}\int^{tT}_0\frac{dZ^T_s}{\lambda^T_s}=\frac{1}{T}\int^{tT}_0\frac{dM^T_s}{\lambda^T_s}+\frac{1}{T}\int^{tT}_0\frac{\lambda^T_s}{\lambda^T_s}ds=\frac{1}{T}\int^{tT}_0\frac{dM^T_s}{\lambda^T_s}+t.
\end{align*}
Hence, to get point (a), it suffices to prove that 
$
\E\Big[\big(\frac{1}{T}\int^{tT}_0\frac{dM^T_s}{\lambda^T_s}\big)^2\Big]\to 0
$ as $T\to \infty$.
In fact, the Burkholder-Davis-Gundy inequality implies that for any $t\in[0,1]$
\begin{align*}
    \E\Big[\Big(\frac{1}{T}\int^{tT}_0\frac{dM^T_s}{\lambda^T_s}\Big)^2\Big]\le& \frac{C}{T^2} \E\Big[\int^{tT}_0\frac{dZ^T_s}{(\lambda^T_s)^2}\Big] \\
    =& \frac{C}{T^2} \E\Big[\int^{tT}_0\frac{dM^T_s}{(\lambda^T_s)^2}+\int^{tT}_0\frac{\lambda^T_sds}{(\lambda^T_s)^2}\Big]\\
    \le& \frac{C}{T^2} \E\Big[\int^{tT}_0\frac{dM^T_s}{(\lambda^T_s)^2}\Big]+\frac{CtT}{\mu  T^2}
  \le \frac{C}{\mu T} .
    \end{align*}
The last inequality follows from that $\E\Big[\int^{tT}_0\frac{dM^T_s}{(\lambda^T_s)^2}\Big]=0$. Now letting $T\to\infty$,  we end the step.

\vip

{\bf Step 3.} In the third step, we will verify that the sequence  $\frac{1}{T}\int^{t}_0\lambda_{Ts}^{T}ds$ is tight in $D[0,1]$ equipped with the Skorohod topology. It is obvious  that $\frac{1}{T}\int^{t}_0\lambda_{Ts}^{T}ds$ is continuous for $t\in[0,1]$. 
Recalling $Z^T$ and $\lambda^T$ introduced in \eqref{sssy}, we find 
$$\E[Z_{Tt}^T]=\E\left[\int_0^{Tt} \lambda^T_s ds\right] \stackrel{s=Tv}{=}\E\left[T\int_0^{t} \lambda^T_{Tv} dv\right].$$
Whence,  using  again Proposition \ref{estiZ}-(i) for any fixed $x\in[0,1]$, we conclude that there exists a constant $C>0$ such that for any $T$ and any $t\in[0,1]$, we have 
$$
\E\Big[\frac{1}{T}\int^{t}_0\lambda_{Ts}^{T}ds\Big]=\E\Big[\frac{Z^T_{Tt}}{T^2}\Big]\le \E\Big[\frac{Z^T_{T}}{T^2}\Big]\le C ,
$$
which finishes the step.
\vip
By Prokhorov's theorem,  there exists some increasing sequence  $\{T_i\}_{i\ge 1}$ satisfying  $\lim_{i\to\infty}T_i=\infty$ and a continuous process $\phi(t),$ such that     $\lim_{i\to\infty}\frac{1}{T_i}\int^{t}_0\lambda_{T_is}^{T_i}ds=\phi(t).$ 

\vip

{\bf Step 4.} In the last step, we are going to prove the result.
For any positive smooth function $K(\cdot)$ supported on $\R^+$,  taking the convolutions of the both sides of $(\ref{convo}),$ we get 
\begin{align}
    & \int^{t}_0\int^{s}_0K(t-s)T\Big(1-\Phi^T(T(s-s'))\Big)\frac {\lambda_{Ts'}^{T}} {T}ds' ds\label{5}\\
    &=\mu \int^t_0K(t-s)sds+ \int^{t}_0\int_{0}^{s}K(t-s)\Phi^T(T(s-s'))\sqrt{\frac{\lambda_{Ts'}^{T}}{T}} dB^T_{s'}ds\label{6}.
\end{align}
For $(\ref{5}),$  using Fubini's theorem and  taking the substitution $s=k+s'$, it is equal to 
\begin{align*}
    &\int^{t}_0\int^{t}_{s'}K(t-s)T\Big(1-\Phi^T(T(s-s'))\Big)\frac {\lambda_{Ts'}^{T}} {T}ds ds'\\
 &=\int^{t}_0\int^{t-s'}_0K(t-s'-k)T\Big(1-\Phi^T(Tk)\Big)\frac {\lambda_{Ts'}^{T}} {T}dk ds'\\
  & \stackrel{k'=Tk}{=}\int^{t}_0\int^{T(t-s')}_0 K\Big(t-s'-\frac{k'}{T}\Big)\Big(1-\Phi^T(k')\Big)\frac {\lambda_{Ts'}^{T}} {T}dk'ds'.
\end{align*}
Recalling $\Phi^T(t):=\int_0^t \varphi^T(s) ds$ and using  Fubini's theorem again, we can easily find that 
$$
\int_0^\infty (a_T-\Phi^T(k'))dk'=\cint\int_{k'}^\infty \varphi^T(s)ds dk'=a_T\int_0^\infty s\varphi(s) ds=m a_T.
$$
Hence, by dominated convergence theorem, we have 
\begin{align*}
   & \clim \int^{T(t-s')}_0 K\Big(t-s'-\frac{k'}{T}\Big)\Big(1-\Phi^T(k')\Big) dk'\\
    &= \clim \left[ \int^{T(t-s')}_0 K\Big(t-s'-\frac{k'}{T}\Big)\Big(a_T-\Phi^T(k')\Big) dk'+\int^{t-s'}_0 K\Big(t-s'-k\Big)T(1-a_T) dk \right]\\
    &= K(t-s')\cint\varphi(k') dk'-\lambda\int^{t-s'}_0 K\Big(t-s'-k\Big) dk\\
    &=m  K(t-s')-\lambda \int_0^{t-s'}K(u)du.
\end{align*}
Let's recall that  $\frac{1}{T_i}\int^{t}_0\lambda_{T_is'}^{T_i}ds'\to \phi_t$ as $i\to\infty$.  And using integration by parts and noticing $\phi_0=0,$  we have
\begin{align*}
    \int^{t}_0 \Big(\int_0^{t-s}K(u)du\Big)d\phi_s= \int^{t}_0 \phi_s K(t-s)ds.
\end{align*}
Hence,  taking  the  limit, \eqref{5} turns into 
\begin{align*}
  &  \lim_{i\to\infty} \int^{t}_0\int^{t-s'}_0K(t-s'-k)T_i\Big(1-\Phi^T_i(T_ik)\Big)\frac {\lambda_{T_is'}^{T_i}} {T_i}dk ds'\\
  &=m  \int_0^t K(t-s)d\phi_s-\lambda\int^{t}_0 K(t-s)\phi_sds. 
\end{align*}
Next, we are going to analyze $(\ref{6}).$ We observe that
\begin{align*}
    \clim \int_{0}^{t}K(t-s)\Phi^T(T(s-s'))ds= \clim \int_{s'}^{t}K(t-s)\Phi^T(T(s-s'))ds=\int_{s'}^{t}K(t-s)ds,
\end{align*}
since $\clim \Phi^T(T(s-s'))=1$ when $s\ge s'$. Hence

\begin{align*}
    \lim_{i\to\infty} \int^{t}_0\int_{0}^{s}K(t-s)\Phi^{T_i}(T_i(s-s'))\sqrt{\frac{\lambda_{T_is'}^{T_i}}{T_i}}dB^{T_i}_{s'}ds = \int_0^t  \Big(\int_0^{s}\sqrt{\frac{d\phi_{s'}}{ds'}}dB_{s'}\Big)K(t-s)ds.
\end{align*}
Since $(\ref{5})=(\ref{6}),$ we  thus  conclude that for any smooth function $K$,
\begin{align*}
    m  \int_0^t K(t-s)d\phi_s-\lambda\int^{t}_0 K(t-s)\phi_sds=\mu \int^t_0K(t-s)sds+\int_0^t  \Big(\int_0^{s}\sqrt{\frac{d\phi_{s'}}{ds'}}dB_{s'}\Big)K(t-s)ds,
\end{align*}
which implies  that $\phi_t$ satisfies 
\begin{align*}
    m d\phi_t=(\mu t+\lambda\phi_t)dt+\Big(\int_0^{t}\sqrt{\frac{d\phi_{s'}}{ds'}}dB_{s'}\Big)dt.
\end{align*}
Let $X_t=\frac{d\phi_{t}}{dt},$ we thus have $X_t$ satisfy
\begin{align}\label{sde}
     X_t=\frac{1}{m}\int_0^t\Big(\mu+\lambda X_s\Big)ds+\frac{1}{m}\int_0^t \sqrt{X_s}dB_s.
\end{align}
 Moreover,  due to
$\frac{\int_0^{t}\lambda_{Ts}^Tds}{T}=\frac{\int_0^{Tt}\lambda_{s}^Tds}{T^2}$, we find 
the difference 
\[ \frac{Z_{Tt}^T}{T^2}-\frac{\int_0^{t}\lambda_{Ts}^Tds}{T}=\frac{Z_{Tt}^T}{T^2}-\frac{\int_0^{Tt}\lambda_{s}^Tds}{T^2}=\frac{M^T_{Tt}}{T^2},\]
is a martingale. Thus, we have  for any $\epsilon>0$, 
$$\p\left(\sup_{t\in[0,1]}\left|\frac{M^T_{Tt}}{T^2}\right|\ge \epsilon\right)\le\frac{4}{T^4}\E\left[\int_0^T\lambda_{s}^Tds \right]\to0,$$
as $T\to\infty$. In fact, using the Doob's martingale inequality,
\begin{align*}
\E\left[\sup_{t\in[0,1]}\left|\frac{M^T_{Tt}}{T^2}\right|\right]\le \frac{4}{T^4}\E[(M_T^T)^2]=\frac{4}{T^4}\E[N_T^T]=\frac{4}{T^4}\E\left[\int_0^T\lambda_{s}^Tds \right].
\end{align*}
Consequently, for the Skorohod topology,
\begin{align*}
    \clim \frac{Z_{Tt}^T}{T^2}= \clim \frac{\int_0^{t}\lambda_{Ts}^{T}ds}{T}\stackrel{(d)}\longrightarrow\int^t_0X_sds,\hbox{ for} \  t\in[0,1].
\end{align*}
Finally, we can readily find that the SDE \eqref{sde} admits a unique strong solution by using \cite[Theorem 3.2]{MR1011252}. Indeed, the drift coefficient $b(x):=\frac\mu{m}+\frac\lambda{m}x$ is Lipschitz continuous and the diffusion coefficient $\sigma(x):=\frac{1}{m}\sqrt{x}$ is $1/2$-H\"{o}lder continuous on $[0,\infty)$. Together with the previous steps, we conclude the conclusion.
\end{proof}

\section*{Acknowledgements}
\quad~~
We would like to thank greatly Sylvain Delattre for fruitful discussions. And Liping Xu is supported by National Natural Science Foundation of China (12101028). 
\bibliographystyle{abbrv} 
\bibliography{hawkes}

\end{CJK}
\end{document}